
\documentclass{amsart}

\usepackage[fleqn,tbtags]{mathtools}
\mathtoolsset{showonlyrefs,showmanualtags}

\usepackage{amssymb, amsmath}
\usepackage{mathrsfs}
\usepackage{amscd}
\usepackage[active]{srcltx}
\usepackage{verbatim}

\usepackage[colorlinks,linkcolor={blue},citecolor={blue},urlcolor={red},]{hyperref}

\usepackage{newsymbol}
\let\mathcal \undefined
\def\mathcal{\mathscr}

\let\emptyset \undefined
\let\ge       \undefined
\let\le       \undefined
\newsymbol\le          1336  \let\leq\le
\newsymbol\ge          133E  
\newsymbol\emptyset    203F
\newsymbol\notle       230A
\newsymbol\notge       230B

\theoremstyle{plain}
\newtheorem{theorem}{Theorem}[section]
\theoremstyle{remark}
\newtheorem{remark}[theorem]{Remark}
\newtheorem{example}[theorem]{Example}
\theoremstyle{plain}
\newtheorem{corollary}[theorem]{Corollary}
\newtheorem{lemma}[theorem]{Lemma}
\newtheorem{proposition}[theorem]{Proposition}

\numberwithin{equation}{section}


\def\N{{\mathbb N}}

\def\R{{\mathbb R}}

\def\dbP{{\mathbb{P}}}

%
%

\def\3n{\negthinspace \negthinspace \negthinspace }
\def\2n{\negthinspace \negthinspace }
\def\1n{\negthinspace }

%
%

\def\Om{\Omega}
\def\om{\omega}
%
%

%

%

%

\def\mE{{\mathbb{E}}}

%

%
%

\def\wt{\widetilde}

\def\({\Big (}
\def\){\Big )}
\def\[{\Big[}
\def\]{\Big]}

\def\={\buildrel \triangle \over =}


\newcommand{\E}{{\mathbb E}}
\renewcommand{\P}{{\mathbb P}}
\newcommand{\F}{{\mathcal F}}


\newcommand{\ud}{\,{\rm d}}



\newcommand{\beq}{\begin{equation}}
\newcommand{\eeq}{\end{equation}}
\newcommand{\bal}{\begin{aligned}}
\newcommand{\eal}{\end{aligned}}
\newcommand{\ben}{\begin{enumerate}}
\newcommand{\een}{\end{enumerate}}
\newcommand{\bit}{\begin{itemize}}
\newcommand{\eit}{\end{itemize}}

\newcommand{\bth}{\begin{theorem}}
\renewcommand{\eth}{\end{theorem}}
\newcommand{\bpr}{\begin{proposition}}
\newcommand{\epr}{\end{proposition}}
\newcommand{\ble}{\begin{lemma}}
\newcommand{\ele}{\end{lemma}}
\newcommand{\bpf}{\begin{proof}}
\newcommand{\epf}{\end{proof}}
\newcommand{\bex}{\begin{example}}
\newcommand{\eex}{\end{example}}
\newcommand{\bre}{\begin{example}}
\newcommand{\ere}{\end{example}}


\newcommand{\calA}{{\mathscr A}}
\newcommand{\calB}{{\mathscr B}}
\newcommand{\calC}{{\mathscr C}}
\newcommand{\calF}{{\mathscr F}}
\newcommand{\n}{\Vert}
\newcommand{\one}{{{\bf 1}}}

\newcommand{\s}{^*}
\newcommand{\lb}{\langle}
\newcommand{\rb}{\rangle}

\newcommand{\Ran}{{\mathsf{R}}}
\newcommand{\Ker}{{\mathsf{N}}}

\begin{document}

 \title{On conditional expectations in $L^p(\mu;L^q(\nu;X))$ }
 \author{Qi L\"u}
 \address{School of Mathematics, Sichuan University, Chengdu 610064, China}
 \email{lu@scu.edu.cn}
 \author{Jan van Neerven}
 \address{Delft University of Technology, P.O. Box 5031, 2600 GA Delft, The Netherlands}
 \email{J.M.A.M.vanNeerven@TUDelft.nl}
 \date{\today}
 \keywords{Conditional expectations in $L^p(\mu;L^q(\nu;X))$, dual of $L_\calF^p(\mu;L^q(\nu;X))$, Radon-Nikod\'ym property}
 \subjclass[2000]{47B38 (46E40, 47B65, 60A10)}
\thanks{Qi L\"u is supported by the NSF of China under
grants 11471231, the Fundamental Research Funds
for the Central Universities in China under
grant 2015SCU04A02, and Grant MTM2014-52347 of
the MICINN, Spain.}
\begin{abstract}
Let  $(A,\calA,\mu)$ and $(B,\calB,\nu)$ be
probability spaces, let $\F$ be a sub-$\sigma$-algebra
of the product $\sigma$-algebra $\calA\times \calB$,
let $X$ be a Banach space and let $1< p,q< \infty$.
We obtain necessary and sufficient conditions in order that the
conditional expectation with respect to $\F$  defines a
bounded linear operator from $L^p(\mu;L^q(\nu;X))$ onto
$L^p_\calF(\mu;L^q(\nu;X))$, the closed subspace
in $L^p(\mu;L^q(\nu;X))$ of all functions having
a strongly $\calF$-measurable representative.
\end{abstract}

\maketitle

\section{Introduction}
Let $(A,\calA,\mu)$ and $(B,\calB,\nu)$ be probability spaces, 
$\calF$ a sub-$\sigma$-algebra of the product $\sigma$-algebra 
$\calA\times \calB$ in $A\times B$, and $X$ a Banach space.
For $1\le p,q\le \infty$ we define
$L_\calF^p(\mu; L^q(\nu;X))$ to be the closed
subspace in $L^p(\mu; L^q(\nu;X))$ consisting of
those  functions which have a strongly
$\calF$-measurable representative. It is easy to
see (e.g., by using \cite[Corollary 1.7]{HNVW}) that
$$ L_\calF^p(\mu; L^q(\nu;X))= L^p(\mu; L^q(\nu;X))
\cap L_\calF^1( \mu\times \nu;X).$$
Furthermore, $L_\calF^p(\mu; L^q(\nu;X))$ is
closed in $L^p(\mu; L^q(\nu;X))$. Indeed, if
$f_n\to f$ in $L^p(\mu; L^q(\nu;X))$ with each
$f_n$ in $L_\calF^p(\mu; L^q(\nu;X))$, then also
$f_n\to f$ in $L^1(\mu\times \nu;X)$, and
therefore $f\in L_\calF^1(\mu\times\nu;X)$. The
reader is referred to \cite{DU, HNVW} for the basic
theory of the Lebesgue-Bochner spaces and
conditional expectations in these spaces. The
same reference contains some standard results
concerning the Radon-Nikod\'ym property that
will be needed later on.

The aim of this paper is to provide a necessary and sufficient 
condition in order that 
conditional expectation $\E(\cdot|\calF)$
restrict to a bounded linear operator on
$L^p(\mu;L^q(\nu;X))$ when 
$1<p,q<\infty$. We also show that
$\E(\cdot|\calF)$ need not to be contractive.
An
example is given which shows that this result
does not extend to the pair $p=\infty$, $q=2$.

Characterisations of conditional expectation operators on general classes of Banach function spaces
$E$ (and their vector-valued counterparts) have been given by various authors (see, e.g., \cite{DHP} and the references therein), but these works
usually {\em assume} that a bounded operator $T: E\to E$ is given and investigate under
what circumstances it is a conditional expectation operator. We have not been able to find any paper addressing the
problem of establishing sufficient conditions for conditional expectation operators to act in
concrete Banach function spaces such as the mixed-norm $L^p(L^q)$-spaces investigated here.

\section{Results}

Throughout this section, $(A,\calA,\mu)$ and $(B,\calB,\nu)$ are probability spaces.
If $1\le p,q\le \infty$, their conjugates $1\le p',q'\le \infty$ are defined by $\frac1p+\frac1{p'} =1$ and $ \frac1q+\frac1{q'} =1$.

It is clear that every $f\in L_{\calF}^p(\mu; L^q(\nu))$ induces a functional
$\phi_f\in (L_{\calF}^{p'}(\mu;L^{q'}\!(\nu)))\s$ in a canonical way, and the resulting mapping $f\mapsto \phi_f$ is contractive.
The first main result of this note reads as follows.

\begin{theorem}\label{thm1} Let $1 <p\le\infty$ and $1<q\le \infty$.
If $f\mapsto \phi_f$ establishes an isomorphism of Banach spaces 
$$L_\calF^p(\mu; L^q(\nu)) \simeq (L_{\calF}^{p'}(\mu;L^{q'}\!(\nu)))\s ,$$ 
then for any Banach space $X$
the conditional expectation operator $\E(\cdot|\calF)$ on $L^1(\mu\times\nu;X)$ 
restricts to a bounded projection on $L^p(\mu; L^q(\nu;X))$.
\end{theorem}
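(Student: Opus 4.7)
The plan is to split the proof into two stages. First, establish the theorem in the scalar case ($X=\R$ or $\C$) by using the hypothesised duality to identify $\E(f|\calF)$ with the element of $L^p_\calF(\mu; L^q(\nu))$ that represents a naturally defined functional on $L^{p'}_\calF(\mu; L^{q'}(\nu))$. Second, reduce the general vector-valued case to the scalar one by means of the pointwise Jensen-type bound $\|\E(f|\calF)\|_X \le \E(\|f\|_X \mid \calF)$ combined with the fact that the mixed-norm space $L^p(\mu; L^q(\nu))$ is a Banach lattice.

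\textbf{Scalar case.} Let $f \in L^p(\mu; L^q(\nu))$. Since $\mu$ and $\nu$ are probability measures, $L^p(\mu; L^q(\nu)) \embed L^1(\mu\times\nu)$, so $\E(f|\calF) \in L^1_\calF(\mu\times\nu)$ is well-defined. Define
$$\psi_f(g) := \int_{A\times B} f g \, d(\mu\times\nu), \qquad g \in L^{p'}_\calF(\mu; L^{q'}(\nu)).$$
The mixed-norm H\"older inequality gives $\|\psi_f\| \le \|f\|_{L^p(L^q)}$, so by the hypothesised isomorphism $\psi_f$ is represented by a unique $h \in L^p_\calF(\mu; L^q(\nu))$ with $\|h\|_{L^p(L^q)} \le C\|f\|_{L^p(L^q)}$. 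To identify $h$ with $\E(f|\calF)$ I would test $\psi_f$ against indicators $\one_F$, $F \in \calF$, which lie in $L^\infty_\calF \subset L^{p'}_\calF(\mu; L^{q'}(\nu))$ because the measures are finite: this yields
$$\int_F h \, d(\mu\times\nu) = \psi_f(\one_F) = \int_F f \, d(\mu\times\nu) = \int_F \E(f|\calF) \, d(\mu\times\nu)$$
for every $F \in \calF$. Since $h$ and $\E(f|\calF)$ are both $\calF$-measurable and in $L^1(\mu\times\nu)$, this forces $h = \E(f|\calF)$ a.e., yielding both $\E(f|\calF) \in L^p_\calF(\mu; L^q(\nu))$ and the required norm bound.

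\textbf{Vector-valued extension and projection property.} For a general Banach space $X$ and $f \in L^p(\mu; L^q(\nu;X)) \subset L^1(\mu\times\nu;X)$, the $L^1$-theory of vector-valued conditional expectations supplies $\E(f|\calF) \in L^1_\calF(\mu\times\nu;X)$ together with the pointwise bound $\|\E(f|\calF)\|_X \le \E(\|f\|_X \mid \calF)$ a.e. Applying the scalar step to $\|f\|_X \in L^p(\mu; L^q(\nu))$ gives $\E(\|f\|_X \mid \calF) \in L^p(\mu; L^q(\nu))$ with norm at most $C\|f\|_{L^p(L^q(X))}$, and monotonicity of the mixed-norm yields
$$\|\E(f|\calF)\|_{L^p(L^q(X))} = \bigl\|\,\|\E(f|\calF)\|_X\,\bigr\|_{L^p(L^q)} \le \bigl\|\,\E(\|f\|_X \mid \calF)\,\bigr\|_{L^p(L^q)} \le C\|f\|_{L^p(L^q(X))}.$$
Since $\E(f|\calF)$ is strongly $\calF$-measurable and now known to be in $L^p(\mu; L^q(\nu;X))$, it belongs to $L^p_\calF(\mu; L^q(\nu;X))$; idempotency of $\E(\cdot|\calF)$ on $L^1$ passes to $L^p(\mu; L^q(\nu;X))$, so the restricted operator is a bounded projection onto $L^p_\calF(\mu; L^q(\nu;X))$.

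\textbf{Main obstacle.} I expect the real conceptual hurdle to be the vector-valued extension: the duality hypothesis is purely scalar, so one cannot invoke it in $X$-valued form without, e.g., assuming RNP on $X^*$. Bypassing this via the Jensen-type bound together with the Banach lattice structure of $L^p(\mu; L^q(\nu))$ is the key idea and is what makes the conclusion hold for arbitrary $X$. A minor technical step is the identification $h = \E(f|\calF)$ in the scalar case, which relies on having enough $\calF$-measurable test functions in the predual; this is supplied here by the inclusion $L^\infty_\calF \embed L^{p'}_\calF(\mu; L^{q'}(\nu))$, which follows from the finiteness of $\mu$ and $\nu$.
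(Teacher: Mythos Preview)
Your proof is correct and follows essentially the same approach as the paper: reduce to the scalar case via the pointwise inequality $\|\E(f|\calF)\|_X \le \E(\|f\|_X\mid\calF)$, then in the scalar case use the assumed duality to represent the restriction of the functional $g\mapsto \int fg$ to $L^{p'}_\calF(\mu;L^{q'}(\nu))$ by some $h\in L^p_\calF(\mu;L^q(\nu))$, and identify $h=\E(f|\calF)$ by testing against indicators $\one_F$, $F\in\calF$. The only cosmetic differences are that the paper phrases the scalar step in terms of the adjoint $I^*$ of the inclusion $I:L^{p'}_\calF\hookrightarrow L^{p'}$ and appeals to the closed graph theorem for boundedness, whereas you read off the norm bound directly from the isomorphism constant; your route is in fact slightly more direct.
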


\begin{proof} We will show that  $\E(f|\calF)\in L^p(\mu; L^q(\nu;X))$
for all $f\in L^p(\mu; L^q(\nu;X))$. A standard closed graph argument then gives the boundedness
of $\E(\cdot|\calF)$ as an operator in $L^p(\mu; L^q(\nu;X))$.

Since $\n \E(f|\calF)\n_X \le \E (\n f\n_X | \calF)$ $\mu\times\nu$-almost everywhere, it suffices
to prove that $\E(g|\calF)\in L^p(\mu; L^q(\nu))$
for all $g\in L^p(\mu; L^q(\nu))$.
To prove the latter, consider the inclusion mapping
$$I: L_\calF^{p'}(\mu; L^{q'}\!(\nu)) \to L^{p'}\!(\mu;L^{q'}\!(\nu)).$$
Every $g\in L^p(\mu; L^q(\nu))$
defines an element of $(L^{p'}\!(\mu; L^{q'}\!(\nu)))\s$ in the natural way and we have, for all $F\in \calF$,
$$ \lb \one_F, I\s g\rb = \lb I \one_F,g\rb = \int_{F} g \ud\mu\times \nu  .$$
The implicit
use of Fubini's theorem to rewrite the double integral over $A$ and $B$ as an integral over $A\times B$
in the second equality is justified by non-negativity, writing
$g = g^+ - g^-$ and considering these functions separately.
On the other hand, viewing $g$ and $\one_F$ as elements of $L^1(\mu\times\nu)$
and $L^\infty(\mu\times\nu)$ respectively, we have
$$  \int_{F} g \ud\mu\times \nu
= \int_{F} \E(g|\calF) \ud\mu\times \nu
= \lb \one_F, \E(g|\calF) \rb.$$
We conclude that $\lb \one_F, I\s g\rb = \lb \E(g|\calF),\one_F \rb$, where on the left the duality is
between $L^{p'}\!(\mu; L^{q'}\!(\nu))$ and its dual, and on the right between $L^1(\mu\times \nu)$ and $L^\infty(\mu\times\nu)$.
Passing to linear combinations of indicators, it follows that
$$
\sup_\phi|\lb \phi, I\s g\rb|
 = \sup_\phi|\lb \E(g|\calF), \phi \rb| = \n  \E(g|\calF)\n_1 < \infty,
$$
where both suprema run over the simple functions $\phi$ in $L^\infty_\calF(\mu\times\nu)$ of norm $\le 1$. Denoting their closure by $L^\infty_{0,\calF}(\mu\times\nu)$,
it follows that $I\s g$ defines an element of $(L^\infty_{0,\calF}(\mu\times\nu))\s$.
This identification is one-to-one: for if $\lb \phi, I\s g\rb = 0$ for all simple $\calF$-measurable functions $\phi$, then $\lb \phi, I\s g\rb =0$ for all $\phi\in L_{\F}^{{ p'}}\!(\mu; L^{{ q'}}\!(\nu))$, noting that
the simple $\calF$-measurable functions are dense in $L_{\F}^{{ p'}}\!(\mu; L^{{ q'}}\!(\nu))$ (here we use that
${ p'}$ and ${ q'}$ are finite).

As an element of $(L^\infty_{0,\calF}(\mu\times\nu))\s$, $I\s g$ equals the function $\E(g|\calF)$, viewed as an element in the same space. Since the embedding
of $ L_\calF^1(\mu\times\nu)$ into $(L^\infty_{0,\calF}(\mu\times\nu))\s$ is isometric, it follows that $I\s g = \E(g|\calF) \in L_\calF^1(\mu\times\nu)$.
Since $I\s g \in (L_{\calF}^{p'}(\mu;L^{q'}\!(\nu)))\s $, by the assumption of the theorem we may identify
$I\s g$ with a function in $L^p(\mu;L^q(\nu))$.
We conclude that
$\E(g|\calF) = I\s g\in L_\calF^{p}(\mu; L^{q}(\nu))$.
\end{proof}

{
If we make a stronger assumption, more can be said:

\begin{theorem}\label{thm2}
Suppose that $1<p,q<\infty$ and let $X$ be a non-zero Banach space.
Then the following assertions are equivalent:
\begin{enumerate}
 \item[\rm(1)] the conditional expectation operator $\E(\cdot|\calF)$ restricts to a bounded projection on the space $L^p(\mu; L^q(\nu;X))$;
 \item[\rm(2)] { the conditional expectation operator $\E(\cdot|\calF)$ restricts to a bounded projection on the space $L^{p'}\!(\mu; L^{q'}\!(\nu;X))$;}
 \item[\rm(3)] $f\mapsto \phi_f$ induces an isomorphism of Banach spaces 
$$L_\calF^p(\mu; L^q(\nu)) \simeq (L_{\calF}^{p'}(\mu;L^{q'}\!(\nu)))\s .$$
\end{enumerate}
\end{theorem}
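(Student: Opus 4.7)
The plan is to establish the cycle $(3)\Rightarrow(1)$, $(3)\Rightarrow(2)$, $(1)\Rightarrow(3)$, and $(2)\Rightarrow(3)$. The first of these is Theorem~\ref{thm1} directly. For $(3)\Rightarrow(2)$ I would exploit reflexivity: since $1<p',q'<\infty$, the space $L^{p'}(\mu;L^{q'}(\nu))$ is reflexive, and so is its closed subspace $L^{p'}_\calF(\mu;L^{q'}(\nu))$. Writing $J$ for the canonical isomorphism in (3) and $\iota$ for the canonical embedding $L^{p'}_\calF\to (L^{p'}_\calF)^{**}$ (which is an isomorphism by reflexivity), a short calculation shows that $J^*\circ\iota$ equals the canonical map $g\mapsto \phi_g$ from $L^{p'}_\calF(\mu;L^{q'}(\nu))$ to $(L^p_\calF(\mu;L^q(\nu)))^*$. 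Hence this latter canonical map is an isomorphism too, and Theorem~\ref{thm1} applied with the roles of $(p,q)$ and $(p',q')$ reversed yields (2).

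For $(1)\Rightarrow(3)$ I would first reduce to the scalar case. Fix $x_0\in X$ with $\n x_0\n=1$ (possible as $X\ne 0$); then $f\mapsto f\otimes x_0$ is a linear isometry from $L^p(\mu;L^q(\nu))$ into $L^p(\mu;L^q(\nu;X))$ that intertwines conditional expectation, i.e.\ $\E(f\otimes x_0|\calF)=\E(f|\calF)\otimes x_0$. Thus (1) implies that $\E(\cdot|\calF)$ restricts to a bounded projection on the scalar space $L^p(\mu;L^q(\nu))$. With this in hand, the canonical map $J\colon f\mapsto \phi_f$ from $L^p_\calF$ to $(L^{p'}_\calF)^*$ is always contractive, and injectivity is immediate: if $\phi_f=0$ then testing against $\one_F$ with $F\in\calF$ gives $\int_F f\ud\mu\times\nu =0$, which forces $f=0$ since $f$ is itself $\calF$-measurable. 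For surjectivity, given $\phi\in (L^{p'}_\calF)^*$ I would extend $\phi$ by Hahn-Banach to a functional on $L^{p'}(\mu;L^{q'}(\nu))$, represented by some $g\in L^p(\mu;L^q(\nu))$; by assumption $\E(g|\calF)\in L^p_\calF$, and the defining property of conditional expectation yields
\[
\phi(h)=\int gh\ud\mu\times\nu=\int \E(g|\calF)\,h\ud\mu\times\nu=\phi_{\E(g|\calF)}(h) \qquad (h\in L^{p'}_\calF).
\]
Thus $\phi\in\Ran(J)$, and the open mapping theorem provides a bounded inverse, giving (3). The implication $(2)\Rightarrow(3)$ is entirely analogous: apply the scalar reduction and the Hahn-Banach/open mapping argument with $(p,q)$ and $(p',q')$ exchanged, and then transport the resulting isomorphism back to the required form via the duality-plus-reflexivity device from the first paragraph.

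None of the individual steps is difficult, so the main point requiring care is the one underlying $(3)\Rightarrow(2)$: one must verify that the isomorphism obtained by duality is precisely the canonical pairing $g\mapsto\phi_g$ needed to invoke Theorem~\ref{thm1}, not merely some abstractly isomorphic identification. This is confirmed by the direct computation $(J^*\circ\iota(g))(f)=\iota(g)(J(f))=\phi_f(g)=\int fg\ud\mu\times\nu=\phi_g(f)$ for all $f\in L^p_\calF$ and $g\in L^{p'}_\calF$.
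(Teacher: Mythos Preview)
Your argument is correct. The paper organises the proof differently, as the cycle $(3)\Rightarrow(1)\Rightarrow(2)\Rightarrow(3)$: for $(1)\Rightarrow(2)$ it observes that the adjoint of $\E(\cdot|\calF)$ on $L^p(\mu;L^q(\nu))$ (identifying $(L^p(L^q))^*=L^{p'}(L^{q'})$) is again $\E(\cdot|\calF)$, hence bounded; for $(2)\Rightarrow(3)$ it invokes the abstract projection lemma (Lemma~\ref{lem:proj}) stating that $\Ran(P)^*\simeq\Ran(P^*)$ naturally. Your Hahn--Banach/open mapping argument for $(1)\Rightarrow(3)$ is essentially an unpacking of that lemma in this concrete setting (the extension-then-project step produces exactly the inverse to restriction), and your reflexivity device for $(3)\Rightarrow(2)$ replaces the paper's self-adjointness step. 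The scalar reduction also differs: the paper appeals to the fact that \cite[Theorem~2.1.3]{HNVW} extends to mixed $L^p(L^q)$-spaces, whereas your embedding $f\mapsto f\otimes x_0$ is more elementary and fully self-contained. Both routes are of comparable length; the paper's is slightly more streamlined structurally (three implications rather than four), while yours avoids the external reference and makes the role of reflexivity explicit.
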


\begin{remark}
In \cite{LYZ} it is shown that condition (3) is satisfied if 
\begin{equation}\label{LYZ}
\hbox{  $I \times \E_\nu$ maps $L^1_\calF(\mu\times\nu)$ into itself.} 
\end{equation}
Here
$\E_\nu$ denotes the bounded operator on $L^1(\nu)$ defined by
$$ \E_\nu f := (\E_\nu f)\one,$$
with $\E_\nu f = \int f\ud \nu$.
\end{remark}

The proof of Theorem \ref{thm2} is based on the following elementary lemma.

\begin{lemma}\label{lem:proj}
 Let $P$ be a bounded projection on a Banach space $X$. Let $X_0 = \Ran(P)$, $X_1 = \Ker(P)$, $Y_0 = \Ran(P\s)$ and $Y_1 = \Ker(P\s)$, so that we have direct sum decompositions $X = X_0\oplus X_1$ and $X\s = Y_0\oplus Y_1$. Then 
we have natural isomorphisms of Banach spaces $X_0\s = Y_0$ and $X_1\s = Y_1$.
\end{lemma}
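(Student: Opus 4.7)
The plan is to identify the claimed isomorphisms concretely as restriction maps, since the cleanest description of $X_0^*$ in the presence of a projection is ``restrict a functional on $X$ to the complemented subspace''. First, I would introduce the restriction map $\Phi_0\colon Y_0 \to X_0^*$ given by $\Phi_0(y^*) := y^*|_{X_0}$, and symmetrically $\Phi_1\colon Y_1 \to X_1^*$ given by $\Phi_1(y^*) := y^*|_{X_1}$. Both are evidently bounded linear with $\|\Phi_i\| \le 1$. The rest of the proof consists of verifying each is an isomorphism; the two cases are symmetric, so I would do $\Phi_0$ in detail and indicate the parallel argument for $\Phi_1$.

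For injectivity of $\Phi_0$: since $P^*$ is itself a bounded projection (from $(P^*)^2 = (P^2)^* = P^*$), every $y^* \in Y_0 = \Ran(P^*)$ satisfies $y^* = P^* y^*$. Then for any $x \in X$, $\langle x, y^*\rangle = \langle Px, y^*\rangle$, where $Px \in X_0$. So if $y^*|_{X_0} = 0$, then $\langle x,y^*\rangle = 0$ for all $x$, whence $y^* = 0$. For surjectivity of $\Phi_0$: given $\phi \in X_0^*$, extend to $X$ by setting $\tilde\phi := \phi \circ P$, which lies in $X^*$ with $\|\tilde\phi\| \le \|P\|\,\|\phi\|$. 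One checks $P^*\tilde\phi = \tilde\phi$, i.e., for all $x \in X$, $(P^*\tilde\phi)(x) = \tilde\phi(Px) = \phi(P^2 x) = \phi(Px) = \tilde\phi(x)$. Hence $\tilde\phi \in Y_0$, and clearly $\Phi_0(\tilde\phi) = \phi$. This already gives a continuous inverse of operator norm at most $\|P\|$, so $\Phi_0$ is a topological isomorphism.

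For $\Phi_1$: the same argument works verbatim with $P$ replaced by $I-P$ (which is a projection onto $X_1$ with kernel $X_0$, and whose dual is $I-P^*$, a projection onto $Y_1$ with kernel $Y_0$). Injectivity uses $y^* = (I-P^*) y^*$ for $y^* \in Y_1$, and surjectivity extends $\psi \in X_1^*$ to $\psi \circ (I-P) \in X^*$.

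I do not foresee any real obstacle; the only point requiring a little care is keeping straight the relations $\Ker(P^*) = \Ran(P)^\perp$ and $\Ran(P^*) = \Ker(P)^\perp$ (which is why the duality pairs $X_0$ with $Y_0$ rather than with $Y_1$). Everything else is a direct consequence of $P^2 = P$ and the elementary fact that a bounded idempotent on a Banach space induces a direct sum decomposition both on the space and on its dual.
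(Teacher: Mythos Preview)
The paper does not actually prove this lemma; it is stated as ``elementary'' and used without proof in the argument for Theorem~\ref{thm2}. Your proposal is correct and supplies exactly the standard argument one would expect: the restriction map $Y_0\to X_0^*$ is inverted by $\phi\mapsto \phi\circ P$, and the case of $X_1^*\simeq Y_1$ follows by replacing $P$ with $I-P$.
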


\begin{proof}[Proof of Theorem \ref{thm2}]
We have already proved (3)$\Rightarrow$(1). For proving (1)$\Rightarrow$(2)$\Rightarrow$(3)
there is no loss of generality in assuming that $X$ is the scalar field,
for instance by observing that the proof of Theorem \cite[Theorem 2.1.3]{HNVW} also 
works for mixed $L^p(L^q)$-spaces.

\smallskip
(1)$\Rightarrow$(2):\
The assumption (1) implies that
$L_\calF^{p}(\mu;L^{q}(\nu))$ is the range of the bounded projection $(\E(\cdot|\calF))$
in $L^{p}(\mu;L^{q}(\nu))$. Moreover, $\lb \E(f|\calF),g\rb = \lb f,\E(g|\calF)\rb$ for all $f\in L^p(\mu;L^q(\nu))$
and $g\in L^{p'}\!(\mu;L^{q'}\!(\nu))$, since this is true for $f$ and $g$ in the (dense) intersections of these spaces with $L^2(\mu\times \nu)$.
It follows that the conditional expectation $\E(\cdot|\calF)$ is bounded on $L^{p'}\!(\mu;L^{q'}\!(\nu)) =(L^{p}(\mu;L^{q}(\nu)) )\s $ and equals $(\E(\cdot|\calF))\s$. 
Clearly it is a projection and its range equals
$L_\calF^{p'}\!(\mu;L^{q'}\!(\nu))$.

\smallskip (2)$\Rightarrow$(3):\
This implication follows Lemma \ref{lem:proj}.
\end{proof}

Inspection of the proof of Theorem \ref{thm1} 
shows that if for all $f\in L_\calF^{p}(\mu; L^{q}(\nu))$ we have
$\n f\n_{L_\calF^p(\mu;L^q(\nu))} = \n f\n_{(L_\calF^{p'}(\mu; L^{q'}\!(\nu)))\s}$,
then $\E(\cdot|\calF)$ is contractive on $L^p(\mu;L^q(\nu))$.
The next example, due to Qiu \cite{Qiu}, shows that the conditional expectation, when it is bounded, may fail to be contractive.

\begin{example}\label{ex:Qiu}
Let $A = B = \{0,1\}$ with $\calA = \calB = \{\emptyset, \{0\}, \{1\},\{0,1\}\}$ and
$\mu = \nu$ the measure on $\{0,1\}$ that gives each point mass $\frac12$,
 and let $\F$ be the $\sigma$-algebra generated by the three sets $\{(0,1)\}$, $\{(1,1)\}$, $\{(0,0),(1,0)\}$.
If we think of $B$ as describing discrete `time', then $\F$ is the progressive $\sigma$-algebra
corresponding to the filtration $(\F_t)_{t\in\{0,1\}}$ in $A$ given by $\F_0 = \{\emptyset,\{0,1\}\}$
and $\F_1 = \{\emptyset, \{0\}, \{1\},\{0,1\}\}$.

Let $f: A\times B \to \R$ be defined by
$$
f(0,0) = 0, \quad f(1,0) = 1, \quad f(0,1) = 1, \quad f(1,1)=0.
$$
Then
$$
\E(f|\F)(0,0) = \frac12, \quad \E(f|\F)(1,0) = \frac12, \quad \E(f|\F)(0,1) = 1, \quad \E(f|\F)(1,1)=0.
$$
Hence in this example we have
\begin{equation*}
\begin{aligned}
\n f\n_{L^p(\mu;L^2(\nu))} & = \Big[\Big(\frac12\Big)^{p/2} + \Big(\frac12\Big)^{p/2}\Big]^{1/p},  \\
\n \E(f|\calF)\n_{L^p(\mu;L^2(\nu))} & =
\Big[\Big(\frac18\Big)^{p/2} +
\Big(\frac58\Big)^{p/2}\Big]^{1/p}.
\end{aligned}
\end{equation*}
Consequently, for large enough $p$
the conditional expectation fails to be contractive in $L^p(\mu;L^2(\nu))$.
\end{example}

We continue with two examples showing that 
the condition expectation operator on $L^1(\mu\times\nu)$ may fail to restrict to a bounded operator 
on $L^p(\mu;L^q(\nu))$. The first was communicated to us by Gilles Pisier.

\begin{example}\label{ex:Pisier}
Let $(A,\calA, \mu)$ and $(B,\calB, \nu)$ be probability spaces and let 
$(C,\calC,\P) = (A,\calA, \mu)\times (B,\calB, \nu)$ be their product.
Consider the infinite product $(C,\calC,\P)^\N = (C^\N, \calC^\N,\P^\N)$; with an obvious identification it may be identified with 
$ (A^\N,\calA^\N, \mu^\N)\times (B^\N,\calB^\N, \nu^\N)$. 

Consider the sub-$\sigma$-algebra $\calF^\N$ of $\calA^\N\times\calB^\N = \calC^\N$, where $\calF\subseteq \calA\times\calB = \calC$ is a given sub-$\sigma$-algebra.
Let $T:= \E(\cdot|\calF)$ and $T^\N := \E(\cdot|\calF^\N)$ be the conditional expectation operators on $L^1(\mu\times \nu)$ and $L^1(\mu^\N\times\nu^\N)$, respectively.
For a function $f\in L^\infty(\mu^\N\times \nu^\N)$ of the form $f = f_1\otimes\cdots\otimes f_N\otimes \one\otimes\one\otimes \hdots$
with $f_n\in L^1(\mu\times \nu)$ for all $n=1,\dots,N$, we have 
$$ T^\N f = Tf_1 \otimes\cdots\otimes T f_N\otimes \one\otimes\one\otimes \hdots$$
By an elementary computation,
$$ \n f\n_{L^p(\mu^\N;L^q(\nu^\N))} = \prod_{n=1}^N \n f_n \n_{L^p(\mu;L^q(\nu))}$$
and 
$$\n T^\N f\n_{L^p(\mu^\N;L^q(\nu^\N))} = \prod_{n=1}^N \n Tf_n \n_{L^p(\mu;L^q(\nu))}.$$
This being true for very $N\ge 1$ we see that $T^\N$ is bounded if and only if $T$ is contractive. Example \ref{ex:Qiu}, however, shows that 
the latter need not always be the case.
\end{example}

The second example is due to Tuomas Hyt\"onen:

\begin{example}\label{ex:Tuomas}
Let $\calB$ the Borel $\sigma$-algebra of $[0, 1)$. For $A \in \calB \times\calB$, let
$$\wt A := \{(y, x) : (x, y) \in A\}$$
and let
$$\calF := \{A \in \calB \times\calB :\, \wt A= A\}$$
be the symmetric sub-$\sigma$-algebra of the product $\sigma$-algebra. Then $\E(\cdot|\calF )$ does not
restrict to a bounded operator on $L^p(L^q):= L^p(0, 1; L^q (0, 1))$ when $p  \not = q$.
To see this let $\wt f(x, y) := f (y, x)$. One checks that
$$ \E(f |\calF ) = \frac12(f + \wt f) \ge \frac12 \wt f$$
if $f \ge 0$. In particular, $\E(\phi  \otimes \psi |\calF ) \ge  \frac12  \psi  \otimes \phi$  if $\phi , \psi  \ge 0$.
Let then $\phi  \in L^p(0, 1)$, $\psi  \in L^q (0, 1)$ be positive functions such that only one of
them is in $L^{p\vee q} (0, 1)$. If $f = \phi  \otimes \psi$, then
$$\n f \n_{L^p (L^q )} = \n \phi \n_{L^p} \n \psi \n_{L^q} < \infty$$
but
$$\n \E(f |\calF )\n_{ L^p (L^q )}\ge \frac12 
 \n \psi  \otimes \phi \n_{ L^p(L^q )} = \frac12\n \psi \n_{ L^p }\n \phi \n_{ L^q} .$$
If $p> q$, then $\n \psi \n_{ L^p} = \infty$, and if $p < q$, then $\n \phi \n _{L^q} = \infty$, so that in either case
$\n \E(f |\calF )\n_{ L^p (L^q ) }= \infty$. 
\end{example}

Let us check that \eqref{LYZ} fails in the above examples. As in Example \ref{ex:Qiu}
let $A = B = \{0,1\}$ with $\calA = \calB = \{\emptyset, \{0\}, \{1\},\{0,1\}\}$,
$\mu = \nu$ the measure on $\{0,1\}$ that gives each point mass $\frac12$, and  $\F$ 
the $\sigma$-algebra generated by the three sets $\{(0,1)\}$, $\{(1,1)\}$, $\{(0,0),(1,0)\}$.
Let  $f: A\times B \to \R$ be defined by
$$
f(0,0) = 1, \quad f(1,0) = 1, \quad f(0,1) = 0, \quad f(1,1)=1.
$$
This function is $\calF$-measurable, but $(I\otimes \E_\nu)f$ is not:
$$(I\otimes \E_\nu)f(0,0) = \frac12, \ \ (I\otimes \E_\nu)f(1,0) = 1, \ \
(I\otimes \E_\nu)f(0,1) = \frac12, \ \ (I\otimes \E_\nu)f(1,1) = 1. 
$$
Thus \eqref{LYZ} fails in Example \ref{ex:Qiu}.
It is clear that if we start from this example,  \eqref{LYZ} also fails in Example \ref{ex:Pisier}. 
In Example \ref{ex:Tuomas} \eqref{LYZ} also fails, for obvious reasons.

An interesting example where condition 
\eqref{LYZ} is satisfied is the case when $A =[0,1]$ is the unit interval, $B = \Omega$ a probability space,
and $\calF = {\mathscr P}$ the progressive $\sigma$-algebra in $[0,1]\times \Omega$. 
From Theorem \ref{thm1} we therefore obtain the following result:

\begin{corollary}\label{cor:LYZ} For all $1<p,q<\infty$ and all Banach spaces $X$, the conditional expectation
with respect to the progressive $\sigma$-algebra on $[0,1]\times\Omega$ is bounded on $L^p(0,1;L^q(\Om;X))$. 
\end{corollary}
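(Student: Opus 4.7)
The plan is to combine the tools already developed in the paper: I would verify that condition \eqref{LYZ} holds for the progressive $\sigma$-algebra $\mathscr{P}$ on $[0,1]\times\Omega$, then appeal to the result of \cite{LYZ} quoted in the remark following Theorem \ref{thm2} to deduce condition (3) of that theorem, and finally invoke Theorem \ref{thm1} to conclude that $\E(\cdot|\mathscr{P})$ is bounded on $L^p(0,1;L^q(\Omega;X))$ for every Banach space $X$.

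The only step that requires real work is the verification of \eqref{LYZ}. Writing $\P$ for the probability measure on $\Omega$, I would take $f\in L^1_\mathscr{P}([0,1]\times\Omega)$ and set $g := (I\otimes \E_\P)f$. By the definition of $\E_\P$ recalled in the remark,
$$g(t,\omega) = \int_\Omega f(t,\omega')\,d\P(\omega') =: h(t),$$
so $g$ in fact depends only on $t$; Fubini's theorem then guarantees that $h$ is defined for a.e.\ $t$ and is Borel measurable on $[0,1]$.

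To finish the verification, I would observe that any $\R$-valued function on $[0,1]\times\Omega$ depending only on $t$ via a Borel measurable map is automatically $\mathscr{P}$-measurable: for each $s\in[0,1]$ its restriction to $[0,s]\times\Omega$ is $\calB([0,s])\times\{\emptyset,\Omega\}$-measurable, hence $\calB([0,s])\times\calF_s$-measurable since $\{\emptyset,\Omega\}\subseteq\calF_s$. This gives $g\in L^1_\mathscr{P}([0,1]\times\Omega)$, which is precisely \eqref{LYZ}. No serious obstacle is anticipated; the main subtlety is just to notice that $\omega$-averaging a progressive function yields a function constant in $\omega$, and such functions lie in $\mathscr{P}$ because the trivial $\sigma$-algebra is contained in every $\calF_s$.
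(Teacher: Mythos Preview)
Your proposal is correct and follows exactly the route taken in the paper: verify condition \eqref{LYZ} for the progressive $\sigma$-algebra, use the result of \cite{LYZ} from the remark to obtain the duality isomorphism, and then apply Theorem~\ref{thm1}. The paper merely asserts that \eqref{LYZ} holds in this setting without writing out the verification, so your explicit argument that $(I\otimes\E_\P)f$ depends only on $t$ and is therefore $\mathscr{P}$-measurable simply fills in a detail the authors left to the reader.
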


This quoted result of \cite{LYZ} plays
an important role in the study of well-posedness
and control problems for stochastic partial
differential equations. For example, in
\cite{Lu2}, it is used to show the well-posedness of
stochastic Schr\"odinger equations with
non-homogeneous boundary conditions in the sense
of transposition solutions,
in \cite{Lu1} it is applied
to obtain a relationship between
null controllability of stochastic heat
equations, and in \cite{LYZ} it is used to
establish a Pontryagin type maximum for
controlled stochastic evolution equations with
non-convex control domain.

As a consequence of (a special case of)
\cite[Theorem A.3]{DY}  we obtain that 
the assumptions of Theorem \ref{thm1} are also satisfied for progressive $\sigma$-algebra
$\calF = {\mathscr P}$ if we replace $L^p(0,1;L^q(\Om;X))$ by $L^p(\Om;L^q(0,1;X))$.
The quoted theorem is stated in terms of the predictable $\sigma$-algebra $\mathscr{G}$. However, since every progressively measurable set
$P\in\mathscr{P}$ is of the form $P = G \Delta N$ with $G\in \mathscr{G}$ and $N$ a null set in the product $\sigma$-algebra $\mathscr{F}\times \mathscr{B}([0,1])$ (see \cite[Lemma 3.5]{CW}),
we have $L_\mathscr{G}^p(\Omega;L^q(0,1;X)) = L_\mathscr{P}^p(\Omega;L^q(0,1;X))$. Therefore, \cite[Theorem A.3]{DY} remains true if 
we replace the predictable $\sigma$-algebra by the progressive $\sigma$-algebra and we obtain the following result:

\begin{corollary} For all $1<p,q<\infty$ and all Banach spaces $X$,
the conditional expectation
with respect to the progressive $\sigma$-algebra on $\Omega\times[0,1]$ is bounded on $L^p(\Om;L^q(0,1;X))$.
\end{corollary}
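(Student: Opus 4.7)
The plan is to apply Theorem~\ref{thm1} with $(A,\calA,\mu) = (\Om,\mathscr{F},\P)$, $(B,\calB,\nu) = ([0,1],\mathscr{B}([0,1]),dt)$, and $\calF := \mathscr{P}$ the progressive $\sigma$-algebra on $\Om\times[0,1]$. Once the duality hypothesis of Theorem~\ref{thm1} is verified for this data, the conclusion is immediate: for every Banach space $X$, the conditional expectation $\E(\cdot|\mathscr{P})$ restricts to a bounded projection on $L^p(\Om;L^q(0,1;X))$.

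To verify the hypothesis, I would transfer the known duality for the predictable $\sigma$-algebra. The cited \cite[Theorem~A.3]{DY} yields the natural isomorphism
$$ L_\mathscr{G}^p(\Om;L^q(0,1)) \;\simeq\; \bigl(L_\mathscr{G}^{p'}(\Om;L^{q'}(0,1))\bigr)^* $$
for the predictable $\sigma$-algebra $\mathscr{G}$. To pass from $\mathscr{G}$ to $\mathscr{P}$ I would appeal to \cite[Lemma~3.5]{CW}, which asserts that every $P\in\mathscr{P}$ is of the form $P = G\,\Delta\,N$ with $G\in\mathscr{G}$ and $N$ a null set in $\mathscr{F}\times\mathscr{B}([0,1])$. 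Since membership in $L_\calF^r(\mu;L^s(\nu;X))$ depends only on the $\mu\times\nu$-equivalence class of a representative, this set-theoretic identity lifts to the function-space equality
$$ L_\mathscr{G}^r(\Om;L^s(0,1;X)) = L_\mathscr{P}^r(\Om;L^s(0,1;X)) $$
for every $1\le r,s\le\infty$ and every Banach space $X$. In particular, the $\mathscr{G}$-duality reads verbatim as the desired $\mathscr{P}$-duality, so the hypothesis of Theorem~\ref{thm1} holds and delivers the conclusion.

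Because the analytic content is supplied entirely by the cited results, the main obstacle is the bookkeeping in the transfer step: one must verify that the closed subspace of $L^p(\Om;L^q(0,1;X))$ of functions with a strongly $\mathscr{P}$-measurable representative coincides with the corresponding subspace for $\mathscr{G}$. This reduces, via the Pettis measurability theorem, to the elementary observation that modifying a strongly measurable representative on a product-measurable null set preserves strong measurability with respect to either of the two $\sigma$-algebras.
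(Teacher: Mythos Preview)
Your proposal is correct and follows essentially the same approach as the paper: verify the duality hypothesis of Theorem~\ref{thm1} by invoking \cite[Theorem~A.3]{DY} for the predictable $\sigma$-algebra and then transfer to the progressive $\sigma$-algebra via \cite[Lemma~3.5]{CW}, exactly as in the paragraph preceding the corollary. The only cosmetic difference is that the paper's formal proof first records the scalar case and then passes to general $X$ via the mixed-norm version of \cite[Theorem~2.1.3]{HNVW}, whereas you (equivalently, and arguably more cleanly) apply Theorem~\ref{thm1} directly, since its conclusion already covers all Banach spaces $X$.
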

\begin{proof}
In the scalar-valued case we apply \cite[Theorem A.3]{DY} (with $J$ a singleton).
The vector-valued case then follows from the observation, already made in the proof of Theorem \ref{thm2}, that Theorem \cite[Theorem 2.1.3]{HNVW} also holds for mixed $L^p(L^q)$-spaces.
\end{proof}


Our final example shows that condition (2) in Theorem \ref{thm2} fails for
the pair $p=1$, $q=2$ 
even when $X$ is the scalar field. 

\begin{example}
Let $\{\mathscr{F}_t\}_{t\in[0,1]}$ be the filtration generated by a
one-dimensional standard Brownian motion
$\{W(t)\}_{t\in[0,1]}$ defined on a probability space $(\Omega,\mathscr{F},\dbP)$. Let $\mathscr{P}$ be the associated progressive $\sigma$-algebra
on $\Om\times [0,1]$. We will show that
\begin{equation*}
L^\infty_\mathscr{P}(\Om;L^2(0,1)) \subsetneq (L^1_\mathscr{P}(\Om;L^2(0,1)))^*
\end{equation*}
in the sense that the former is contained isometrically as a
{\em proper}
closed subspace of the latter.

For $v\in L^1_\mathscr{P}(\Om;L^2(0,1))$ consider the
solution $x$ to the following problem:
\begin{equation}\label{eq2}
\left\{\begin{aligned}
\ud x(t) & =v(t)\ud W(t), \quad t\in [0,1],\\
    x(0) & =0.
\end{aligned}
\right.
\end{equation}
By the classical well-posedness theory of SDEs
(e.g. \cite[Chapter V, Section 3]{Protter}),
$x\in L^1_\mathscr{P}(\Om;C([0,1]))$ and
\begin{equation}\label{eq3}
\n x\n_{L^1_\mathscr{P}(\Om;C([0,1]))}\leq
C\n v\n _{L^1_\mathscr{P}(\Om;L^2(0,1))}
\end{equation}
for some constant $C$ independent of $v$. Let $\xi\in L^{\infty}_{\mathscr{F}_1}(\Om)$.
Define a linear functional $L$ on
$L^1_\mathscr{P}(\Om;L^2(0,1))$ as follows:
$$
L(v):=\mE(\xi x(1)).
$$
By \eqref{eq3}, $L$ is bounded.
Suppose now, for a contradiction, that $(L^1_\mathscr{P}(\Om;L^2(0,1)))^* =
L^{\infty}_\mathscr{P}(\Om;L^2(0,1))$
with equivalent norms. Then there is an $f\in
L^{\infty}_\mathscr{P}(\Om;L^2(0,1))$ such that
\begin{equation}\label{eq4}
L(v)=\mE\int_0^1 f(t)v(t)\ud t
\end{equation}
for all $v\in L^1_\mathscr{P}(\Om;L^2(0,1))$.
On the other hand, by the martingale representation
theorem there is a $g\in L_\mathscr{P}^2(\Om;L^2(0,1))$
such that
\begin{equation}\label{eq:MRT}
\xi = \mE(\xi) + \int_0^1 g(t)\ud W(t).
\end{equation}
Take now $v\in L_\mathscr{P}^2(\Om;L^2(0,1))$ in \eqref{eq2}. Then by
It\^o's formula,
\begin{equation}\label{eq5}
\mE(\xi x(1))= \mE\int_0^1 g(t)v(t)\ud t.
\end{equation}
Since \eqref{eq4} and \eqref{eq5} hold for all
$v\in L_\mathscr{P}^2(\Om;L^2(0,1))$,
it follows that $f=g$ for almost all $(t,\om)\in
(0,1)\times\Om$.  Hence, $g\in
L^{\infty}_\mathscr{P}(\Om;L^2(0,1))$.  This
leads to a contradiction, since it would imply
that the isometry from $\{\xi\in L_{\calF_1}^2(\Om): \, \E \xi = 0\}$ into $L_{\mathscr{P}}^2(\Om;L^2(0,1))$
given by \eqref{eq:MRT} sends
$\{\xi\in L_{\calF_1}^\infty(\Om):\, \E \xi = 0\}$ into
$L^{\infty}_\mathscr{P}(\Om;L^2(0,1))$. This is
known to be false (see, e.g., \cite[Lemma A.1]{Fd}).
\end{example}

It would be interesting to determine an explicit representation for the dual of $L_\mathscr{P}^1(\Om;L^2(0,1))$.

\begin{remark}
In \cite{LYZ}, the authors proved that
$(L^1_\mathscr{P}(0,1;L^2(\Om)))^* =
L^\infty_\mathscr{P}(0,1;L^2(\Om)).$
 It seems that this result cannot be
obtained by the method in this paper.
\end{remark}

{\em Acknowledgment} -- The authors thank Gilles Pisier for pointing out an error in an earlier version of the paper and 
communicating to us Example \ref{ex:Pisier} and Tuomas Hyt\"onen for showing us Example \ref{ex:Tuomas}.

\end{document}